\begin{document} 

\input{header}
\input{section_1_intro}
\ProvidesFile{section_2_prerequisites.tex}[section 2 prerequisites]

\section{Prerequisites}

All ring under consideration are associative with 1 not equal to 0.

\begin{lemma}[see \cite{Lam}]
For any non-zero ring $R$, the following statements are equivalent:

\begin{enumerate}[a)]
\item $R$ has a unique maximal left ideal;

\item $R$ has a unique maximal right ideal;

\item a factor ring $R$ by its radical is a skew field;

\item all non-units of $R$ form an ideal;

\item all non-units of $R$ form an additive group.
\end{enumerate}
\end{lemma}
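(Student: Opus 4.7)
The plan is to treat (d) as the pivot and establish the equivalence of (d) with each of (a), (b), (c), (e); the symmetry between left and right will emerge as a byproduct. Underlying the whole argument is the basic fact, which I would record first, that in any associative ring with $1$ an element $x$ has a left inverse iff $Rx = R$ iff $x$ avoids every maximal left ideal (and the mirror statement holds on the right). Combined with Zorn's lemma, this identifies the union of all maximal left ideals with the set of non-left-invertible elements.

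The direction $(d) \Rightarrow (a), (b), (c), (e)$ is essentially bookkeeping: if the set $I$ of non-units is a two-sided ideal, then every proper one-sided ideal, containing no unit, lies inside $I$, so $I$ is simultaneously the unique maximal left and right ideal, while $R/I$ has every nonzero element invertible and is therefore a skew field; (e) is automatic. For $(a) \Rightarrow (d)$ (and symmetrically $(b) \Rightarrow (d)$), with $M$ the unique maximal left ideal, I would show that every $x \notin M$ is already a two-sided unit: given $yx = 1$ with $xy \ne 1$, the nonzero idempotent $e := 1 - xy$ satisfies $ex = 0$, so $Re$ is a proper left ideal contained in $M$; hence $xy = 1 - e \notin M$ is left-invertible, i.e., $(zx)y = 1$ for some $z$, which together with $yx = 1$ makes $y$ a two-sided unit whose inverse must coincide with $x$, forcing the contradiction $xy = 1$. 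Thus the non-units coincide with $M$, which is an ideal. The implication $(c) \Rightarrow (d)$ is then obtained by identifying the non-units with the kernel of the projection onto the skew field $R/\mathrm{rad}(R)$.

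The step I expect to be the main obstacle is $(e) \Rightarrow (d)$: upgrading the additive group $N$ of non-units to a two-sided ideal. For $x \in N$ and $r \in R$, I would suppose for contradiction that $rx$ is a unit with two-sided inverse $s$, so $(sr)x = 1$ and the idempotent $f := 1 - xsr$ satisfies $fx = 0$. If $f = 0$ then $sr$ is a two-sided inverse of $x$, contradicting $x \in N$. If $f \ne 0$, then $xsr$ is a nonzero idempotent that cannot be a unit (a unit idempotent must equal $1$, forcing $f = 0$), so $xsr \in N$; but then $1 = f + xsr$ with $1 \notin N$ forces, via the additive-group property, $f \notin N$, so $f$ is itself a unit idempotent and hence $f = 1$. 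This gives $xsr = 0$, and combined with $srx = 1$ yields $x = x(srx) = (xsr)x = 0$, whence $rx = 0$ is a non-unit, contradicting the hypothesis on $rx$. A symmetric argument handles $xr$, completing the reduction to (d).
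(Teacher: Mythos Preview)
The paper does not supply its own proof of this lemma; it is stated with the tag ``see \cite{Lam}'' and no argument is given, so there is no in-paper proof to compare your proposal against.

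That said, your argument is essentially correct. Two small remarks. In the step $(a)\Rightarrow(d)$ you conclude that ``the non-units coincide with $M$, which is an ideal''; strictly speaking you have only exhibited $M$ as a \emph{left} ideal at that point. The cleanest fix, already implicit in your layout, is to observe that what you have actually shown is $(a)\Rightarrow(e)$, and then invoke your separate $(e)\Rightarrow(d)$ argument to close the loop (alternatively, note that $M$ coincides with the Jacobson radical, which is always two-sided). In the step $(e)\Rightarrow(d)$, the phrase ``$xsr$ is a nonzero idempotent'' should read ``$xsr$ is an idempotent $\neq 1$'': the hypothesis $f\neq 0$ gives $xsr\neq 1$, not $xsr\neq 0$. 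This is harmless, since any idempotent other than $1$ (including $0$) is a non-unit, so $xsr\in N$ and the remainder of your argument goes through unchanged.
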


\begin{definition}
A ring satisfying any condition from the previous lemma is called \it{local}.
\end{definition}

Let $R$ be a local ring, and let $V$ be a free $R$-module of finite rank, then, by theorem 20.13 (2) from \cite{Lam}, the rank of $V$ is well-defined. Consider $\GL(V)$ --- a group of all invertible $R$-linear endomorphisms on $V$. Fix a basis in $V$, then $\GL(V)$ is a group of all non-degenerate $n\times n$-matrices over $R$, i.~e. $\GL_n(R)$, where $n$ is the rank of $V$.

Let us briefly recall the concept of elementary equivalence.

\begin{definition}
Formula $\varphi$ of signature $\Sigma$ is called \emph{universal} (\emph{existential}), if its prenex normal form is written as
$$
Q_1 x_1\ldots Q_n x_n \psi(x_1,\ldots,x_n),
$$
where $Q_1=\dots= Q_n=\forall \text{ }(Q_1=\dots=Q_n=\exists)$, and $\psi$ is quantifier-free.
\end{definition}
 
\begin{definition}
Two algebraic systems $\mathfrak{A}$ and $\mathfrak{B}$ of signature $\Sigma$ are called {\it universally equivalent} ({\it existentially equivalent}), if for every universal (existential) predicate $\varphi$ of signature $\Sigma$ the following holds true:
$$
\mathfrak{A}\models\varphi\Longleftrightarrow\mathfrak{B}\models\varphi.
$$
The set of all universal (existential) predicates $\{\varphi\mid\mathfrak{A}\models\varphi\}$ of signature $\Sigma$ is called the {\it universal} ({\it existential}) {\it theory} of system $\mathfrak{A}$, it is denoted by $\mbox{Th}_{\forall}(\mathfrak{A})$ ($\mbox{Th}_{\exists}(\mathfrak{A})$). Hence, $\mathfrak{A}\equiv_{\forall}\mathfrak{B}\Leftrightarrow\mbox{Th}_{\forall}(\mathfrak{A})=\mbox{Th}_{\forall}(\mathfrak{B})\Leftrightarrow\mbox{Th}_{\exists}(\mathfrak{A})=\mbox{Th}_{\exists}(\mathfrak{B})\Leftrightarrow\mathfrak{A}\equiv_{\exists}\mathfrak{B}$. The latter follows from the quantifiers dependence.
\end{definition}

We will use the following {\it criterion for universal equivalence}: two algebraic systems of the same finite signature are universally equivalent iff every finite submodel of the first system has an isomorphic submodel in the second system and vice versa.

In this paper we prove the following theorem.

\begin{theorem}\label{maintheoremlocal}
Let $R_1$ and $R_2$ be local (non necessarily commutative) rings with $1/2$ and let groups $\GL_n(R_1)$ and $\GL_m(R_2)$ admit automorphisms, defined by formula $\varphi(x) = (x^T)^{-1}$ (inverse-transpose automorphism). 

Then groups $\GL_n(R_1)$ and $\GL_m(R_2)$ $(m, n\geqslant 3)$ are universally equivalent if and only if $n=m$ and rings $R_1$ and $R_2$ are universally equivalent.
\end{theorem}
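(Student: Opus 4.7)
The plan is to prove both implications via the finite-submodel criterion recalled above: two structures of the same finite signature are universally equivalent iff every finite substructure of one embeds into the other and vice versa.

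For the direction ``$n = m$ and $R_1 \equiv_\forall R_2 \Rightarrow \GL_n(R_1) \equiv_\forall \GL_m(R_2)$'', any finite subset $X \subseteq \GL_n(R_1)$ uses only finitely many ring entries, say $S \subseteq R_1$. Since $R_1 \equiv_\forall R_2$, the quantifier-free diagram of $S$ is realized by some $S' \subseteq R_2$; assembling matrices from $S'$ in the same pattern produces an isomorphic finite substructure of $\GL_n(R_2)$. Symmetry completes this direction.

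For the hard direction, assume $\GL_n(R_1) \equiv_\forall \GL_m(R_2)$; one must extract both $n = m$ and $R_1 \equiv_\forall R_2$. To recover the rank I would exhibit a group-theoretic invariant detectable by a universal (equivalently existential) sentence, such as the maximal size of a set of pairwise commuting involutions of a distinguished conjugacy type whose product is $-I$. Because $1/2 \in R$ the diagonal involutions $\mathrm{diag}(\pm 1, \ldots, \pm 1)$ behave well, and the inverse-transpose automorphism $\varphi$ helps fix their conjugacy class definably; the sentence ``there exist $n+1$ such pairwise commuting involutions'' is then false in $\GL_n(R)$ but true for larger ranks, forcing $n = m$.

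With $n = m$ in hand, the core step is to interpret $R$ inside $\GL_n(R)$ in a way robust under finite submodels. I would fix a definable frame of commuting rank-one involutions and use centralizers together with the action of $\varphi$ to pick out the elementary transvection subgroups $\{t_{ij}(r) : r \in R\}$ for $i \ne j$, where $t_{ij}(r)$ denotes the identity with $r$ placed in position $(i,j)$; here $\varphi$ swaps the subgroups indexed by $(i,j)$ and $(j,i)$, which is precisely what allows them to be distinguished. The additive structure of $R$ is then read from $t_{ij}(r)\,t_{ij}(s) = t_{ij}(r+s)$, and the multiplicative structure from the Chevalley commutator $[t_{ij}(r), t_{jk}(s)] = t_{ik}(rs)$ for pairwise distinct $i,j,k$, where $n \geq 3$ is essential. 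Given a finite $\Sigma \subset R_1$, the finite submodel of $\GL_n(R_1)$ built from the fixed frame together with $\{t_{12}(r), t_{23}(r), t_{13}(r) : r \in \Sigma\}$ must transfer to an isomorphic submodel of $\GL_n(R_2)$, from which a matching finite subring of $R_2$ can be extracted; the reverse direction is symmetric.

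The main obstacle I anticipate is this extraction step: one must show that every finite group-theoretic submodel of $\GL_n(R_2)$ of the shape ``frame plus transvections'' genuinely arises from a subring of $R_2$, rather than from some twisted configuration satisfying the same quantifier-free relations. The inverse-transpose hypothesis does real work here — it rules out such twists and makes the ring-recovery map well-defined on finite submodels; without it, the universal theory of the group might fail to pin down the universal theory of the ring.
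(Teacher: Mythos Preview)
Your overall architecture matches the paper's: both directions via the finite-submodel criterion, rank recovery through commuting involutions, and ring recovery through transvections and the Chevalley commutator $[t_{12}(r),t_{23}(s)]=t_{13}(rs)$. Two points where your sketch diverges from the paper are worth flagging.

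For the rank, the paper's invariant is simpler than yours. Over a local ring with $1/2$, every involution is conjugate to some $\diag[\pm1,\ldots,\pm1]$ and any commuting family of involutions is simultaneously diagonalizable; hence the maximal cardinality of a set of pairwise commuting involutions is exactly $2^n-1$, with no conjugacy-type restriction and no product condition. The inverse-transpose map plays no role whatsoever in this step.

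More substantively, your description of how $\varphi$ is used is off. You present it as a definability tool that lets one distinguish the $(i,j)$- and $(j,i)$-transvection subgroups inside the interpretation. The paper does not do this. Instead, the hard work is a chain of explicit matrix computations: one fixes a finite frame $\EuScript{MI}\cup\{\sigma_{12},\sigma_{23},\diag[2,1,1/2,1,\ldots,1]\}$ in the source, and shows that for \emph{any} finite-submodel isomorphism $\Phi$ one can adjust the basis on the target so that $\Phi(\sigma_{12})$, $\Phi(\sigma_{23})$ are standard and $\Phi(E+E_{12})\in\{E+E_{12},\,E-E_{21}\}$. Only at this single point does $\varphi$ enter: if the second alternative occurs, one post-composes $\Phi$ with $\varphi$ on the target (which fixes the diagonal frame and the $\sigma_{ij}$) to force $\Phi(E+E_{12})=E+E_{12}$. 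After that normalization a further lemma shows every $E+\alpha E_{12}$ maps to some $E+\beta E_{12}$, and the ring transfer goes through. So $\varphi$ is a one-shot normalization applied after the ambiguity has already been reduced to two cases by direct computation, not an ingredient threaded through the definable interpretation. Your ``main obstacle'' paragraph correctly identifies where the difficulty lies, but the resolution is this concrete two-case computation rather than a definability argument.
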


The easier part (universal equivalence of rings imply universal equivalence of linear groups) is similar to the proof for fields, see \cite{BuninaKaleeva}. Note that the statement holds true even in case $n = m < 3$ and groups do not admit the inverse-transpose automorphism.

Next we prove the harder part: universal equivalence of linear groups implies the coincidence of size and universal equivalence of corresponding rings.

Note that if linear groups are universally equivalent, then rings $R_1$ and $R_2$ are both either finite of infinite. In case of finite rings, universal equivalence is the same as isomorphism, hence the statement follows from  paper \cite{GolubchikMichalev}. It is proven that the following statements are equivalent in case natural $n, m\geqslant 3$:

\begin{enumerate}
\item $n=m$ and $R_1\cong R_2$,

\item $\GL_n(R_1)\cong \GL_m(R_2)$,
\end{enumerate}
where $R_1$ and $R_2$ are local associative rings with $1/2$.

Next rings $R_1$ and $R_2$ are considered to be infinite.
\ProvidesFile{section_3_main_theorem.tex}[section 3 main theorem]

\section{The main statement proof}

First we study the structure of involutions in a general linear group over a non-commutative ring. We will follow the article \cite{PomfretMcDonald}, where similar consideration was provided for local commutative rings.

\begin{lemma}\label{involution}
Let $R$ be a local ring with $1/2$, and $I\in \GL_n(R)$ be an element, such that $I^2 = E$. Then there exist a basis of left $R$-module $V$ of rank $n$, such that $I$ is represented by a matrix $\diag[1, \dots, 1, -1, \dots, -1]$ in this basis, where the number of ones on the diagonal is $t$, $-1$ --- is $s$, $t + s = n$ and  $t, s$ are uniquely defined.
\end{lemma}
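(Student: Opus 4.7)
The plan is to diagonalize $I$ by splitting $V$ into its $+1$ and $-1$ eigenspaces via the idempotents $e_+ = (E+I)/2$ and $e_- = (E-I)/2$ of $\mathrm{End}_R(V)$, which are available because $1/2 \in R$. Using $I^2 = E$ one checks directly that $e_+ + e_- = E$, $e_+ - e_- = I$, $e_\pm^2 = e_\pm$, and $e_+ e_- = e_- e_+ = 0$, so $e_+$ and $e_-$ are orthogonal idempotents summing to the identity.

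Let $V_+ = e_+(V)$ and $V_- = e_-(V)$. From the idempotent relations one obtains a direct-sum decomposition $V = V_+ \oplus V_-$ of left $R$-modules: every $v$ equals $e_+(v) + e_-(v)$, and if $v \in V_+ \cap V_-$, writing $v = e_-(w)$ and applying $e_+$ gives $v = e_+(v) = e_+ e_-(w) = 0$. Moreover $I$ acts as $+\mathrm{id}$ on $V_+$ and as $-\mathrm{id}$ on $V_-$: for $v = e_+(w)$ one has $I(v) = (e_+ - e_-)e_+(w) = e_+(w) = v$, and analogously on $V_-$.

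Next I would argue that $V_+$ and $V_-$ are free. Each is the image of an idempotent endomorphism of the finitely generated free left module $V$, hence a finitely generated projective $R$-module. Over a (possibly non-commutative) local ring, finitely generated projective modules are free --- this is classical (for instance Lam, Theorem~19.29, or an argument reducing modulo $\mathrm{rad}\,R$ to the residue skew field and lifting a basis by Nakayama). Let $t = \mathrm{rk}\,V_+$ and $s = \mathrm{rk}\,V_-$, well-defined by the rank theorem 20.13(2) of Lam quoted in the prerequisites. Concatenating a basis of $V_+$ with a basis of $V_-$ produces a basis of $V$ in which $I$ has the required matrix $\diag[1,\ldots,1,-1,\ldots,-1]$, and additivity of rank yields $t + s = n$.

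Uniqueness of $t$ and $s$ is then immediate: the numbers $\mathrm{rk}\,V_+$ and $\mathrm{rk}\,V_-$ are invariants of $I$ defined independently of any basis, and any basis realizing $I$ as $\diag[1,\ldots,1,-1,\ldots,-1]$ with $t'$ ones forces the first $t'$ basis vectors (those fixed by $I$) to span $V_+$ and the remaining $s' = n - t'$ to span $V_-$, whence $t' = t$ and $s' = s$ by invariance of rank. The main obstacle relative to the commutative case of Pomfret--McDonald is precisely the projective-implies-free step over a non-commutative local ring; once this is cited or sketched, the remainder of the argument is a formal manipulation of the idempotent decomposition and should transcribe with no further difficulty.
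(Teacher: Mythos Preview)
Your argument is correct and essentially identical to the paper's: your idempotents $e_\pm$ are precisely the projections onto the paper's submodules $P(I)$ and $N(I)$ via $v\mapsto\frac{1}{2}(v\pm I(v))$, and both proofs conclude by invoking Lam's Theorem~19.29 that finitely generated projectives over a local ring are free. The only cosmetic difference is that you phrase the decomposition through orthogonal idempotents rather than eigenspaces, and you spell out the uniqueness of $t,s$ a bit more explicitly than the paper does.
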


\begin{proof}
Consider the following submodules:
$$
N(I) = \{v\in V | I(v) = - v\}, \text{ }
P(I) = \{v \in V | I(v) = v\}.
$$

Assume $u\in N(I)\cap P(I)$, then $I(u)=u=-u$, which, combined with the existence of $1/2$ in the ring, implies $u=0$.

For an arbitrary element $u$ in $V$ we have: 
$$
u = \frac{1}{2}(u - I(u)) + \frac{1}{2}(u + I(u)),
$$ 
where $u - I(u)\in N(I)$, $u + I(u)\in P(I)$. Hence, $V = N(I)\oplus P(I)$. 

The only thing left to do now is to make a few comments on the structure of submodules $N(I)$ and $P(I)$. Firstly, $N(I)$ and $P(I)$ are projective modules, because they are direct summands of a free module. Secondly, since projectve modules over a local (not necessarily commutative) ring are free (\cite{Lam}, theorem 19.29), then $N(I)$ and $P(I)$ are free modules, moreover their ranks are finite and well-defined. In a basis consisting of bases of $P(I)$ and $N(I)$, $I$ has the desired form.
\end{proof}

\begin{lemma}
Let $R$ be a local ring with $1/2$. Consider elements $I_1, \dots, I_m\in \GL_n(R)$, such that $I_j^2=E$ and $I_j I_k = I_k I_j$ for all $j, k = 1, \dots, m$. Then there exists a basis in the left $R$-module of rank  $n$, such that all $I_1, \dots, I_m$ are diagonal with $\pm 1$ on their diagonals.
\end{lemma}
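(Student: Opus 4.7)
The plan is to argue by induction on $m$, using the previous lemma as the base case. For $m=1$, Lemma \ref{involution} gives a basis in which $I_1$ is diagonal with entries $\pm 1$, so there is nothing more to prove. For the inductive step, I assume that the statement holds for any collection of $m-1$ pairwise commuting involutions on a free module of finite rank over $R$, and I want to handle the collection $I_1, \dots, I_m$.

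First I would apply the previous lemma to $I_m$ alone, obtaining the decomposition $V = N(I_m) \oplus P(I_m)$ into free $R$-submodules of finite, well-defined ranks. The key observation is that each $I_j$ with $j < m$ preserves this decomposition: if $v \in P(I_m)$ then $I_m(I_j v) = I_j(I_m v) = I_j v$, so $I_j v \in P(I_m)$, and the same computation with $-v$ on the right shows $I_j$ preserves $N(I_m)$. Hence the restrictions $I_j|_{P(I_m)}$ and $I_j|_{N(I_m)}$ are well-defined $R$-linear endomorphisms, and since $I_j^2 = E$ restricts to the identity and commutation is inherited by restriction, each of the two collections $\{I_j|_{P(I_m)}\}_{j<m}$ and $\{I_j|_{N(I_m)}\}_{j<m}$ consists of $m-1$ pairwise commuting involutions on a free $R$-module of finite rank.

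By the inductive hypothesis, I obtain a basis of $P(I_m)$ in which all $I_j|_{P(I_m)}$ ($j<m$) are diagonal with $\pm 1$ entries, and similarly for $N(I_m)$. The concatenation of these two bases is a basis of $V$. In this basis, each $I_j$ ($j<m$) is diagonal with $\pm 1$ entries by construction, while $I_m$ acts as $+1$ on the basis vectors coming from $P(I_m)$ and as $-1$ on those coming from $N(I_m)$, so $I_m$ is likewise diagonal with $\pm 1$ entries. This gives the desired simultaneous diagonalization.

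I do not expect a genuine obstacle: the only non-routine point is the invariance of $N(I_m)$ and $P(I_m)$ under each $I_j$, which is immediate from commutativity, together with the fact that the restrictions remain involutions on free modules of finite rank so that the induction hypothesis truly applies. The freeness and finiteness of the summands have already been secured inside the proof of Lemma \ref{involution} via the projectivity argument and the Kaplansky--Lam theorem, so no extra work is required at this stage.
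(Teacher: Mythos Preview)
Your argument is correct and follows essentially the same route as the paper: both proceed by induction, using commutativity to show that the remaining involutions preserve the eigenspace decomposition $V = P \oplus N$ of one of them, and then diagonalize on each free summand separately. The only cosmetic difference is that the paper peels off $I_1$ first and adds operators one at a time, whereas you peel off $I_m$ and apply the inductive hypothesis to $I_1,\dots,I_{m-1}$ on each summand; the content is the same.
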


\begin{proof}
First consider a pair of two commuting operators of order 2: $I_1, I_2$. By the lemma \ref{involution}, we can fix such a basis, that $I_1=\diag[1, \dots, 1, -1, \dots, -1]$ ($t$ ones and $s$ minus ones). Combined with the fact that $2$ is invertible, the equality $I_1 I_2 = I_2 I_ 1$ implies that, in the previously fixed basis, $I_2$ consists of two blocks of size $t\times t$ and $s\times s$ on the diagonal.

Consider $v\in P(I_1)$, then $I_1 I_2(v) = I_2 I_1 (v)=I_2(v)$, hence $I_2(v)\in P(I_1)$. It means that submodule $P(I_1)$ is invariant with respect to $I_2$, the same is true for $N(I_1)$. Now it is sufficient to change bases in $P(I_1)$ and $N(I_1)$, in such a manner that operator $I_2$ has the desired form; this does not affect $I_1$.

For more then two operators we can build the same proof by induction, because for every additional operator all intersections of previous invariant submodules are invariant with respect to this new operator.
\end{proof}

\begin{corollary}
Let $R$ be a local ring with $1/2$. The biggest cardinality of a subset of $\GL_n(R)$, consisting of pairwise commuting involutions, is equal to $2^n - 1$.
\end{corollary}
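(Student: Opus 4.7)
The plan is to combine the previous lemma with an elementary counting argument. I will first prove the upper bound $|S|\leqslant 2^n-1$ for any set $S$ of pairwise commuting involutions, and then exhibit an explicit family attaining this size.

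For the upper bound, I interpret ``involution'' as an element of order exactly $2$, so that $E$ itself is excluded (this is what produces the $-1$ in $2^n-1$). It suffices to bound the cardinality of every finite subset, since the bound is finite and an infinite $S$ would contain finite subsets of arbitrarily large size. Given a finite subset $\{I_1,\ldots,I_m\}\subseteq S$, the previous lemma provides a basis of the underlying free module $V$ in which every $I_j$ becomes a diagonal matrix with entries in $\{+1,-1\}$. There are only $2^n$ such diagonal matrices altogether, and discarding $E$ leaves $2^n-1$, whence $m\leqslant 2^n-1$.

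For the lower bound, I exhibit the $2^n-1$ diagonal matrices $\diag[\varepsilon_1,\ldots,\varepsilon_n]$ with $\varepsilon_i\in\{+1,-1\}$ and not all $\varepsilon_i=1$. These lie in $\GL_n(R)$ since each is its own inverse, they pairwise commute because they are diagonal and $\pm 1$ is central in $R$ (so no commutativity hypothesis on $R$ is needed), and each squares to $E$ while differing from $E$.

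There is no serious obstacle: the previous lemma does all the nontrivial work by simultaneously diagonalising, and the remainder is a $\pm 1$ counting argument. The only point that requires any attention is the convention distinguishing an arbitrary element with $I^2=E$ from a genuine involution $I\ne E$, which is exactly what reduces the count from $2^n$ to $2^n-1$.
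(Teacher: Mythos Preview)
Your proof is correct and follows exactly the approach the paper intends: the corollary is stated without proof precisely because it is immediate from the simultaneous-diagonalisation lemma together with the obvious count of $\pm 1$ diagonal matrices, and you have spelled this out accurately. The only tiny point you leave implicit is that the $2^n-1$ diagonal matrices in your lower-bound family are pairwise distinct, which uses $1\ne -1$; this follows since $2$ is a unit in $R$.
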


\begin{corollary}
Let $R_1$ and $R_2$ be local rings with $1/2$, let $m$ and $n$ be natural numbers, and let groups $\GL_n(R_1)$ and $\GL_m(R_2)$ be universally equivalent. Then $n=m$.
\end{corollary}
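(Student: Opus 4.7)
My plan is to convert the numerical invariant supplied by the preceding corollary into a first-order existential statement in the language of groups and then exploit the equivalence $\equiv_\forall \Leftrightarrow \equiv_\exists$ recorded in the prerequisites.

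For every positive integer $k$, I would introduce the sentence
$$\Phi_k: \exists x_1 \ldots \exists x_k \Bigl( \bigwedge_{i=1}^{k} (x_i \ne e \wedge x_i^2 = e) \wedge \bigwedge_{1 \le i < j \le k} (x_i \ne x_j \wedge x_i x_j = x_j x_i) \Bigr),$$
which asserts the existence of $k$ distinct, pairwise commuting, non-identity involutions. Being a finite conjunction of equations and negated equations prefixed by existential quantifiers, $\Phi_k$ is genuinely an existential sentence in the pure language of groups. By the preceding corollary (whose proof exhibits the $2^n-1$ non-identity $\pm 1$-diagonal matrices as a witness and rules out any larger commuting family via simultaneous diagonalisation), $\GL_n(R_1) \models \Phi_k$ if and only if $k \le 2^n - 1$, and analogously $\GL_m(R_2) \models \Phi_k$ if and only if $k \le 2^m - 1$.

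Since $\GL_n(R_1) \equiv_\forall \GL_m(R_2)$ is equivalent to $\GL_n(R_1) \equiv_\exists \GL_m(R_2)$, both groups satisfy exactly the same existential sentences; in particular $\GL_n(R_1) \models \Phi_k \Leftrightarrow \GL_m(R_2) \models \Phi_k$ for every $k$. Specialising to $k = 2^n - 1$ yields $2^m - 1 \ge 2^n - 1$, while specialising to $k = 2^n$ yields $2^m - 1 < 2^n$; together these force $2^m - 1 = 2^n - 1$, whence $n = m$. There is no real obstacle in this argument — the entire content lies in the preceding corollary, and the passage to first-order logic is a direct notational translation.
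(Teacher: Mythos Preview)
Your argument is correct and is exactly the intended one: the paper states this corollary immediately after the $2^n-1$ cardinality corollary without further proof, precisely because the passage you spell out---encoding ``there exist $k$ distinct pairwise commuting involutions'' as an existential sentence and invoking $\equiv_\forall\Leftrightarrow\equiv_\exists$ from the prerequisites---is routine. Your write-up simply makes explicit what the paper leaves to the reader.
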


From here on we will assume that $n>2$.

Let's fix some maximal subset of pairwise commuting involutions and a basis, such that all involutions are diagonal in this basis. We call this set of involutions $\EuScript{MI}$. Note that $\EuScript{MI}$ is partitioned into conjugate classes, and each conjugate class consists of elements with the same amount of $-1$ on the diagonal. Among all the classes, $\{-E\}$ has the smallest cardinality, next classes in the order of increasing cardinality are: two $n$-element conjugate classes, consisting of matrices with exactly one or $n-1$ minus ones on the diagonal. This conjugate classes can be singled out by an existential formula. Note that a formula, based on counting elements in a conjugate class, describes any of these two classes, but does not specify which one. We call $\EuScript{I}_1$ one of these classes (does not matter, which one). Denote elements of $\EuScript{I}_1$ in the following way: $I_k$ is a matrix with a different element at the $k$-th place on the diagonal.

For any submodel isomorphism with $\EuScript{MI}$ in the domain, the image of this subset in some basis consists of matrices of the same form as $\EuScript{MI}$ in our previously fixed basis. In case the domain contains any diagonal matrix, its image is diagonal as well, because diagonal matrices are exactly the matrices which commute with all elements of $\EuScript{MI}$.

Later, the following lemma will be useful.

\begin{lemma}
Let $R$ be a local ring with $1/2$. Consider an element $x\in R$, s.~t. $x^2 = 1$. Then $x = \pm 1$.
\end{lemma}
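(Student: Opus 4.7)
The plan is to reduce the equation $x^2=1$ to a factorization and then exploit locality to force one factor to vanish. Since $x$ commutes with the element $1$ (and hence with $-1$) even in a non-commutative ring, I can expand
\[
(x-1)(x+1) = x^2 + x - x - 1 = x^2 - 1 = 0.
\]
So the whole question becomes: in a local ring with $1/2$, if $(x-1)(x+1)=0$, must one of the factors be zero?

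Next I would use the characterization of local rings from the opening lemma, namely that the set $J$ of non-units is an ideal (equivalently, an additive group closed under multiplication by ring elements). First observe that $x$ itself is a unit, because $x^2=1$ exhibits $x$ as its own two-sided inverse. Now suppose for contradiction that neither $x-1$ nor $x+1$ is a unit; then both lie in $J$, hence so does their sum
\[
(x-1)+(x+1)=2x.
\]
But $2$ is a unit by hypothesis and $x$ is a unit, so $2x$ is a unit, contradicting $2x\in J$. Therefore at least one of $x-1$, $x+1$ is a unit.

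Finally, if $x-1$ is a unit, multiply $(x-1)(x+1)=0$ on the left by $(x-1)^{-1}$ to conclude $x+1=0$, i.e.\ $x=-1$; symmetrically, if $x+1$ is a unit, multiply on the left by $(x+1)^{-1}$ to get $x-1=0$, i.e.\ $x=1$. In either case $x=\pm1$, completing the proof.

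There is essentially no obstacle here; the only subtlety to keep in mind, given that $R$ need not be commutative, is that the factorization $(x-1)(x+1)=0$ works because $x$ commutes with $\pm 1$, and that the final cancellation must be performed on a specific side (left) using the unit factor's inverse.
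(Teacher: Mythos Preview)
Your argument is correct, with one small wording point: in the second case ($x+1$ a unit), cancelling in $(x-1)(x+1)=0$ requires multiplying on the \emph{right}, or equivalently using the reversed factorization $(x+1)(x-1)=0$ and multiplying on the left; since $x\pm1$ commute this is harmless, but your closing remark about always cancelling ``on a specific side (left)'' is slightly off for that case.

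The paper's proof takes a different, shorter route: it observes that $e=\tfrac{1}{2}(x+1)$ is an idempotent (indeed $e^2=\tfrac{1}{4}(x+1)^2=\tfrac{1}{4}(x^2+2x+1)=\tfrac{1}{2}(x+1)=e$), and a local ring has only the trivial idempotents $0$ and $1$, forcing $x=\pm1$. Both approaches use $1/2$ essentially (you to make $2x$ a unit, the paper to form $e$) and both rest on a characterization of locality from the prerequisites lemma; the idempotent version is more conceptual---it exploits the standard bijection between square roots of $1$ and idempotents in any ring with $1/2$---while yours works directly from the ``non-units form an ideal'' clause and needs no auxiliary fact about idempotents.
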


\begin{proof}
Note that $\frac{1}{2}(x+1)$ is an idempotent. Since ring $R$ is local, this idempotent is trivial: $\frac{1}{2}(x+1) = 0\text{ or } 1$. In the former case $x = -1$, in the latter $x = 1$.
\end{proof}

Now we are ready to prove that some special kinds of matrices preserve their form under submodel isomorphisms.

\begin{lemma}\label{transp}
Let $R_1$ and $R_2$ be infinite local rings with $1/2$ and $\GL_n(R_1)\equiv_{\forall}\GL_n(R_2)$ $(n > 2)$. Let $M_1$ be an arbitrary finite submodel of $\GL_n(R_1)$, s.~t. it contains $\EuScript{MI}$ and matrices, conjugating elements of $\EuScript{MI}$, and let $M_2$ be a finite submodel $\GL_n(R_2)$, which is isomorphic to $M_1$ (it exists due to the universal equivalence criterion).

Then for every isomorphism  
$$
\Phi\colon M_1 \rightarrow M _2
$$
the image of $\sigma_{12}=E-E_{11}-E_{22}+E_{12}+E_{21}$ is the following matrix 
$$
\begin{pmatrix}
0 & 1 & 0 & \dots & 0\\
1 & 0 & 0 & \dots & 0\\
0 & 0 & \pm 1 & &\\
\vdots & \vdots &  & \ddots & \\
0 & 0 & & & \pm 1\\
\end{pmatrix}
$$ with respect to a basis, s.~t. all matrices from $\EuScript{MI}$ do not change their form.
\end{lemma}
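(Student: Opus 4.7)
The plan is to read off the shape of $\tau:=\Phi(\sigma_{12})$ from the multiplicative relations that $\sigma_{12}$ satisfies with $\EuScript{MI}$ inside $M_1$, and then correct the upper-left $2\times 2$ block by one diagonal rescaling. Since $\Phi$ preserves the signature on $M_1$, the image $\Phi(\EuScript{MI})$ is a maximal set of $2^n-1$ pairwise commuting involutions in $\GL_n(R_2)$; by the simultaneous diagonalisation lemma, pick a basis of $R_2^n$ in which every element of $\Phi(\EuScript{MI})$ is diagonal with $\pm 1$ entries. The conjugacy class decomposition of $\EuScript{MI}$ is transferred as well, so $\Phi(\EuScript{I}_1)$ is one of the two $n$-element classes of matrices with a single different entry on the diagonal; after a further permutation of basis vectors we may assume $\Phi(I_k)=I_k$ as matrices for every $k=1,\dots,n$.

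The relations $\sigma_{12} I_k = I_k \sigma_{12}$ for $k\ge 3$ transfer to $\tau I_k=I_k \tau$. Comparing $(i,j)$ entries and using that $2$ is invertible forces $\tau_{ij}=0$ whenever $i\ne j$ and $\max(i,j)\ge 3$, so $\tau$ decomposes into an upper-left $2\times 2$ block $A$ and a diagonal block $D=\diag[\tau_{33},\dots,\tau_{nn}]$. The equation $\tau^2=E$ restricted to the lower block gives $D^2=E$, whence $\tau_{kk}=\pm 1$ for $k\ge 3$ by the lemma on solutions of $x^2=1$. The equations $\tau^2=E$ and $\tau I_1 \tau=I_2$ restricted to the $2\times 2$ block each split into four scalar identities; their pairwise sums and differences, divided by $2$, produce $\tau_{12}\tau_{21}=\tau_{21}\tau_{12}=1$ together with $\tau_{11}\tau_{12}=\tau_{12}\tau_{22}=\tau_{21}\tau_{11}=\tau_{22}\tau_{21}=0$. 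The two-sided invertibility of $\tau_{12}$ and $\tau_{21}$ now forces $\tau_{11}=\tau_{22}=0$.

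Finally, conjugate $\tau$ by the diagonal matrix $P=\diag[1,\tau_{21},1,\dots,1]$. Because every element of $\EuScript{MI}$ is diagonal with $\pm 1$ entries and $\pm 1$ is central in $R_2$, conjugation by $P$ fixes $\EuScript{MI}$ pointwise; on $\tau$ it sends the $(1,2)$ entry to $\tau_{12}\tau_{21}=1$ and the $(2,1)$ entry to $\tau_{21}^{-1}\tau_{21}=1$, while preserving the $\pm 1$ on the lower diagonal. The resulting matrix is exactly the one stated in the lemma. The delicate part of the argument is the non-commutative bookkeeping in the $2\times 2$ block: one must explicitly derive two-sided invertibility of $\tau_{12}$ (not merely one-sided) before being allowed to cancel it in $\tau_{11}\tau_{12}=0$ and $\tau_{12}\tau_{22}=0$. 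An additional small bookkeeping check is that the same computation goes through identically when $\Phi(\EuScript{I}_1)$ is the single-$(+1)$ class rather than the single-$(-1)$ class, since in that case the upper-left blocks $B$ of $I_1$ and $B'$ of $I_2$ still satisfy $B'=-B$.
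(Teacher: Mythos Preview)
Your argument is correct and follows essentially the same route as the paper: the same three relations (commutation with $I_k$ for $k\ge 3$, $\sigma_{12}^2=E$, and $\sigma_{12}I_1\sigma_{12}=I_2$) are used to force the block shape, the $\pm 1$ tail, and the antidiagonal $2\times 2$ block, followed by a single diagonal rescaling. The only cosmetic differences are that the paper rescales with $\diag[1/\alpha,1,\dots,1]$ rather than your $\diag[1,\tau_{21},1,\dots,1]$ and handles the sign ambiguity of $\EuScript{I}_1$ via ``up to multiplication by $-E$'' rather than a separate case remark.
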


\begin{proof}
Denote $\Phi(\sigma_{12})=(d_{ij})$.

Since $\sigma_{12}$ satisfies the following conditions:
\begin{align}
\forall k > 2 (\sigma_{12} I_k=I_k\sigma_{12}),\\
\sigma_{12}^2=E,\\
\sigma_{12}I_1\sigma_{12}=I_2,
\end{align}
so does its image.

From the first condition we have 
$$
\Phi(\sigma_{12})=
\begin{pmatrix}
d_{11} & d_{12} & 0 & \dots & 0\\
d_{21} & d_{22} & 0 & \dots & 0\\
0 & 0 & d_{33}& \dots & 0\\
\vdots & \vdots & \vdots & \ddots & \vdots\\
0 & 0 & 0 & \dots & d_{nn}\\
\end{pmatrix},
$$  
using the second we get $d_{kk} = \pm 1$ for $3 \leqslant k \leqslant n$.

From the second and third conditions for the left upper corner (up to multiplication by $-E$) we get:
\begin{multline*}
\begin{pmatrix}
	d_{11}^2+d_{12}d_{21} & d_{11}d_{12}+d_{12}d_{22} \\
	d_{21}d_{11}+d_{22}d_{21} & d_{21}d_{12}+d_{22}^2 \\
\end{pmatrix}=
\begin{pmatrix}
	1 & 0\\
	0 & 1
\end{pmatrix},\\
\begin{pmatrix}
d_{11} & d_{12}\\
d_{21} & d_{22}
\end{pmatrix}
\begin{pmatrix}
-1 & 0\\
0 & 1
\end{pmatrix}
\begin{pmatrix}
d_{11} & d_{12}\\
d_{21} & d_{22}
\end{pmatrix}=\\
=\begin{pmatrix}
-d_{11}^2+d_{12}d_{21} & -d_{11}d_{12}+d_{12}d_{22}\\
-d_{21}d_{11}+d_{22}d_{21} & -d_{21}d_{12}+d_{22}^2
\end{pmatrix}=
\begin{pmatrix}
1 & 0\\
0 & -1
\end{pmatrix}.
\end{multline*}
Summing up $(1, 1)$-elements and subtracting $(4, 4)$-elements, we obtain: $2 d_{12} d_{21} = 2$ and $2 d_{21} d_{12} = 2$, hence $d_{12}=\alpha$, $d_{21} = 1/\alpha$, where $\alpha$ is a revertible element of $R_2$.

Summing up $(1, 2)$-elements, we have $2d_{12}d_{22} = 0$. Since $2d_{12}$ is invertible, we get $d_{22}=0$, the same way we get $d_{11} = 0$.

Now we change basis using change of basis matrix 
$$
\diag[1/\alpha, 1, \dots, 1],
$$
which commutes with every element of $\EuScript{MI}$, and get the desired form for $\Phi(\sigma_{12})$. 
\end{proof}

\begin{corollary}
Let $R_1$ and $R_2$ be infinite local rings with $1/2$ and $\GL_n(R_1)\equiv_{\forall}\GL_n(R_2)$ $(n > 2)$. And let $M_1$ be an arbitrary finite submodel of $\GL_n(R_1)$, containing $\EuScript{MI}$ and conjugating matrices, and let $M_2$ be a finite submodel of $\GL_n(R_2)$, which is isomorphic to $M_1$.

Then for every isomorphism  
$$
\Phi\colon M_1 \rightarrow M _2
$$
the image of $\sigma_{23}=E-E_{22}-E_{33}+E_{23}+E_{32}$ has the following form 
$$
\begin{pmatrix}
\pm 1 & 0 & 0 & 0 & \dots & 0\\
0 & 0 & 1 & 0 & \dots & 0\\
0 & 1 & 0 & 0 & \dots & 0\\
0 & 0 & 0 &  \pm 1 & &\\
\vdots & \vdots & \vdots &  & \ddots & \\
0 & 0 & 0 & & & \pm 1\\
\end{pmatrix}
$$ in a basis, in which matrices from $\EuScript{MI}$ do not change their form.	
\end{corollary}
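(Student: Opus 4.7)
The plan is to replay the proof of Lemma \ref{transp} with the index pair $(1, 2)$ replaced by $(2, 3)$, exploiting that $\sigma_{23}$ satisfies an entirely analogous list of relations.

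First I would record that $\sigma_{23}$ satisfies
\[
\sigma_{23} I_k = I_k \sigma_{23} \text{ for } k \neq 2, 3, \qquad \sigma_{23}^2 = E, \qquad \sigma_{23} I_2 \sigma_{23} = I_3,
\]
all of which are equations between elements of $M_1$ and hence preserved by $\Phi$. From commutation with $I_1, I_4, \dots, I_n$ (the elements of $\EuScript{MI}$ whose single $-1$ sits outside positions $2$ and $3$), the image $\Phi(\sigma_{23})$ respects the partition $\{1\}\sqcup\{2,3\}\sqcup\{4\}\sqcup\cdots\sqcup\{n\}$ of indices: it is diagonal except for an arbitrary $2\times 2$ block at rows and columns $2, 3$. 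The squaring relation then forces each scalar diagonal entry to satisfy $x^2 = 1$, hence to equal $\pm 1$ by the preceding lemma on involutions in a local ring with $1/2$.

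Next I would analyze the central $2\times 2$ block. Restricted to rows and columns $2, 3$, the matrices $\Phi(I_2)$ and $\Phi(I_3)$ are precisely $\diag[-1, 1]$ and $\diag[1, -1]$, i.e.\ the same two matrices that appeared in the top-left $2\times 2$ corner in the proof of Lemma \ref{transp}. Consequently the conjugation identity $\Phi(\sigma_{23})\Phi(I_2)\Phi(\sigma_{23}) = \Phi(I_3)$ on this block reproduces verbatim the system of scalar equations already solved there, and yields that the block has the form $\begin{pmatrix} 0 & \beta \\ \beta^{-1} & 0 \end{pmatrix}$ for some invertible $\beta \in R_2$.

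Finally I would conjugate by the diagonal matrix $\diag[1, 1, \beta, 1, \dots, 1]$. Being diagonal, this transformation leaves every element of $\EuScript{MI}$ unchanged; a direct check shows it normalizes the $(2, 3)$-block to $\begin{pmatrix} 0 & 1 \\ 1 & 0 \end{pmatrix}$ without disturbing the $\pm 1$'s at positions $(1, 1)$ and $(4, 4), \dots, (n, n)$, giving the form asserted in the corollary. There is no real obstacle: the argument is a bookkeeping translation of Lemma \ref{transp}, and the only conceptual point is to verify that the three defining relations of $\sigma_{23}$ are universal statements between elements already present in $M_1$, so that $\Phi$ is obliged to respect them.
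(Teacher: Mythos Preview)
Your proposal is correct and follows exactly the paper's approach: replay Lemma~\ref{transp} with indices $(2,3)$ in place of $(1,2)$ to obtain the block $\begin{pmatrix}0&\beta\\ \beta^{-1}&0\end{pmatrix}$, then conjugate by $\diag[1,1,\beta,1,\dots,1]$. The paper additionally remarks that this diagonal change of basis also commutes with (the already-normalized) $\sigma_{12}$, which is not needed for the corollary as stated but is used downstream in Lemma~\ref{thetransvection}.
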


\begin{proof}
Similar to the previous lemma, 
$$\Phi(\sigma_{23})=
\begin{pmatrix}
\pm 1& 0&0&0&\dots&0\\
0&0 & \beta & 0 & \dots & 0\\
0&\frac{1}{\beta} & 0 & 0 &\dots & 0\\
0&0 & 0 & \pm 1 & &\\
\vdots& \vdots & \vdots &&\ddots&\\
0&0 & 0 &&& \pm 1
\end{pmatrix},
$$
where $\beta$ is an invertible element of $R_2$. Now we apply change of basis matrix $\diag[1, 1, \beta, 1, \dots, 1]$, which commutes with all elements of  $\EuScript{MI}$ and $\sigma_{12}$.
\end{proof}

\begin{lemma}\label{thetransvection}
Let $R_1$ and $R_2$ be infinite local rings with $1/2$ and $\GL_n(R_1)\equiv_{\forall}\GL_n(R_2)$ $(n > 2)$. And let $M_1$ be an arbitrary finite submodel of $\GL_n(R_1)$, which contains finite set of matrices from the previous lemma and corollary and $\diag[2, 1, 1/2, 1,\dots, 1]$, and let $M_2$ be a finite submodel of $\GL_n(R_2)$, which is isomorphic to $M_1$. Then for every isomorphism  
$$
\Phi\colon M_1 \rightarrow M _2
$$
the image of $E+ E_{12}$ equals $E+E_{12}$ or $E-E_{21}$ in a basis, in which matrices from $\EuScript{MI}$, $\sigma_{12}$ and $\sigma_{23}$ do not change their form.
\end{lemma}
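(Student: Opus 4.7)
The plan is to determine $T' := \Phi(E+E_{12})$ by exploiting the group-theoretic relations the transvection $T := E+E_{12}$ satisfies with the generators of $M_1$. First, since $T$ commutes with every $I_k$ for $k \geq 3$ and $\Phi(I_k) = I_k$ in the chosen basis of $M_2$, the image $T'$ also commutes with each such $I_k$, which forces a block-diagonal shape: a $2 \times 2$ block $A = (a_{ij})$ on coordinates $\{1,2\}$ together with diagonal entries $\lambda_3,\dots,\lambda_n$ on the remaining coordinates.

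Next I would apply the relations $I_j T I_j = T^{-1}$ for $j = 1,2$, which hold in $M_1$ (closed under inverses). Transported via $\Phi$ they yield $\lambda_k^2 = 1$, hence $\lambda_k = \pm 1$ by the scalar involution lemma, together with $A \cdot (I_1 A I_1) = E$ in the $2 \times 2$ block. Expanding this gives $a_{11}^2 - a_{12}a_{21} = 1 = a_{22}^2 - a_{21}a_{12}$, $a_{12}(a_{22}-a_{11}) = 0$, and $a_{21}(a_{11}-a_{22}) = 0$; the case $a_{12}=a_{21}=0$ is excluded by $T^2 \neq E$. Then I would introduce $D := \diag[2,1,1/2,1,\dots,1] \in M_1$ and the identity $DTD^{-1}=T^2$: since $D$ commutes with $\EuScript{MI}$, its image $D' := \Phi(D) = \diag(d_1,\dots,d_n)$ is diagonal, and matching the $(k,k)$-entries of $D' T' (D')^{-1} = (T')^2$ for $k \geq 3$ yields $\lambda_k = \lambda_k^2$, hence $\lambda_k = 1$ (as $2$ is a unit). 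A case analysis of the $2 \times 2$ block, combining the involution relations with $D' A (D')^{-1} = A^2$, then reduces $A$ to one of the forms $E + \gamma E_{12}$ or $E + \gamma E_{21}$ for some invertible $\gamma \in R_2$; the exotic possibility $a_{11}=a_{22}=-1/2$, $a_{12}a_{21}=-3/4$ with both off-diagonal entries invertible yields $A^3 = E$ and is ruled out because $T^3 = E+3E_{12} \neq E$ (in characteristic~$3$ the identity $-3/4 = 0$ forces one of $a_{12}, a_{21}$ to be zero, so the exotic case collapses into the main one).

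Finally I would apply a diagonal change of basis rescaling $e_1$ (or $e_2$) by $\gamma^{-1}$ to normalize the scalar, the remaining sign choice giving $\Phi(T) = E+E_{12}$ or $\Phi(T) = E - E_{21}$, with the latter corresponding to composition with the inverse-transpose automorphism admitted by hypothesis. Such a rescaling keeps each element of $\EuScript{MI}$ diagonal with $\pm 1$ entries and preserves the anti-diagonal shape of $\sigma_{12}$ and $\sigma_{23}$ within their respective $2 \times 2$ blocks, so the required forms are unchanged. I expect the principal obstacle to be the $2 \times 2$ block analysis: in the noncommutative setting one must handle $a_{12}(a_{22}-a_{11})=0$ carefully when the off-diagonal entries may lie in the maximal ideal of $R_2$, and track the correct left/right placement of $\gamma$ in the relation $d_1 \gamma d_2^{-1} = 2\gamma$ arising from $D'A(D')^{-1}=A^2$.
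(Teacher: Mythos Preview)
Your argument has a genuine gap at the final normalization step. You propose to rescale $e_1$ (or $e_2$) by $\gamma^{-1}$ to turn $E+\gamma E_{12}$ into $E+E_{12}$, claiming this ``preserves the anti-diagonal shape of $\sigma_{12}$''. But the lemma asks for more than the shape: the preceding lemma already pinned $\Phi(\sigma_{12})$ to have \emph{exactly} $1$ in positions $(1,2)$ and $(2,1)$. Any diagonal conjugation $\diag[a,b,1,\dots,1]$ sends those entries to $ab^{-1}$ and $ba^{-1}$; keeping both equal to $1$ forces $a=b$, and then the $(1,2)$ entry of $T'$ is sent to $a\gamma a^{-1}$, still not $1$ in general. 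In other words, the freedom to rescale the $(1,2)$ slot was already spent normalizing $\sigma_{12}$, so your approach---which never uses $\sigma_{12}$---cannot determine the scalar $\gamma$. This is exactly why the paper introduces the relation $(I_2\,\sigma_{12}\,(E+E_{12}))^3=E$: once $\Phi(\sigma_{12})$ carries $1$'s on its anti-diagonal, this cubic relation forces $t_{12}=1$ (respectively $t_{21}=-1$) directly, with no further basis change.

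A secondary issue is the $2\times 2$ case analysis. Your equations $a_{12}(a_{22}-a_{11})=0$ and $a_{21}(a_{11}-a_{22})=0$ do not split cleanly over a noncommutative local ring, since neither factor need be a unit. The paper avoids this by also invoking the relation that $\sigma_{23}(E+E_{12})\sigma_{23}$ commutes with $E+E_{12}$; transported through $\Phi$ this gives $t_{21}t_{12}=0$ immediately, after which the local-ring dichotomy ``$t_{12}$ invertible or $t_{11}$ invertible'' (one row of an invertible matrix must contain a unit) organizes the rest. Your relations $I_jTI_j=T^{-1}$ and $DTD^{-1}=T^2$ coincide with the paper's relations $(I_2T)^2=E$ and $DTD^{-1}=T^2$, so that part is sound; what is missing is any use of $\sigma_{12}$ and $\sigma_{23}$, and without them the conclusion cannot be reached in the stated basis.
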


\begin{proof}
Similar to the proof of lemma \ref{transp}, 
$$
\Phi(E+\alpha E_{12})=
\begin{pmatrix}
t_{11} & t_{12} & 0 & \dots & 0\\
t_{21} & t_{22} & 0 & \dots & 0\\
0 & 0 & t_{33}& \dots & 0\\
\vdots & \vdots & \vdots & \ddots & \vdots\\
0 & 0 & 0 & \dots & t_{nn}\\
\end{pmatrix}.
$$

Since $D = \diag[2, 1, 1/2, 1,\dots, 1]$ commutes with all matrices from $\EuScript{MI}$, $$\Phi(\diag[2, 1, 1/2, 1,\dots, 1])=\diag[\alpha_1, \dots, \alpha_n],
$$
where all $\alpha_i$ are invertible.

Here are all the necessary equalities, true in $M_1$, which we will use in the following proof $M_1$ (similar equalities hold true for the images in $M_2$):
\begin{align}
D(E+\alpha E_{12})D^{-1} = (E+\alpha E_{12})^2, \label{diag}\\
(I_2(E+E_{12}))^2 = E, \label{units} \\
(I_2\sigma_{12}(E+E_{12}))^3 = E,\label{rockstar}\\
\sigma_{23}(E+E_{12})\sigma_{23} \text{ commutes with } E+E_{12}.\label{sigma}
\end{align}

\begin{remark}
From $(\ref{rockstar})$, in particular, it follows that $\Phi(I_2)$ has only one $-1$ at the second place on the diagonal.
\end{remark}

From (\ref{diag}) we obtain: $t_{kk} = t_{kk}^2$ and $t_{kk}$ are invertible for all $k = 3, \dots, n$, since they are only non-zero elements in corresponding rows and columns, hence $t_{kk} = 1$ for all $k = 3, \dots, n$.

For the left-upper corner conditions (\ref{diag}) -- (\ref{sigma}) yield:
\begin{gather}
\begin{pmatrix}
t_{11}^2 + t_{12}t_{21} & t_{11}t_{12} + t_{12}t_{22}\\
 t_{21}t_{11} + t_{22}t_{21} & t_{21}t_{12} + t_{22}^2
\end{pmatrix} = 
\begin{pmatrix}
\alpha_1 t_{11}\alpha_1^{-1} & \alpha_1 t_{12}\alpha_2^{-1}\\
\alpha_2 t_{21}\alpha_1^{-1} & \alpha_2 t_{22}\alpha_2^{-1}
\end{pmatrix},\label{diagdash}\\
\begin{pmatrix}
t_{11}^2 - t_{12}t_{21} & t_{11}t_{12} - t_{12}t_{22}\\
-t_{21}t_{11} + t_{22}t_{21} & -t_{21}t_{12} + t_{22}^2
\end{pmatrix} =
\begin{pmatrix}
1 & 0 \\
0 & 1
\end{pmatrix}, \label{unitsdash}\\
\begin{pmatrix}
t_{21}^3 - t_{22}t_{11}t_{21} - t_{21}t_{22}t_{11} + t_{22}t_{12}t_{11} &
t_{21}^2t_{22} - t_{22}t_{11}t_{22} -t_{21}t_{22}t_{12} + t_{22}t_{12}^2\\
-t_{11}t_{21}^2+t_{12}t_{11}t_{21}+t_{11}t_{22}t_{11}-t_{12}^2t_{11} &
-t_{11}t_{21}t_{22}+t_{12}t_{11}t_{22}+t_{11}t_{22}t_{12}-t_{12}^3
\end{pmatrix}=\begin{pmatrix}
1 & 0\\
0 & 1
\end{pmatrix},\label{rockstardash}\\
\begin{pmatrix}
t_{11}^2 & t_{11}t_{12} & t_{12}\\
t_{21} & t_{22} & 0\\
t_{21}t_{11} & t_{21}t_{12} & t_{22}
\end{pmatrix} = 
\begin{pmatrix}
t_{11}^2 & t_{12} & t_{11}t_{12}\\
t_{21}t_{11} & t_{22} & t_{21}t_{12}\\
t_{21} & 0 & t_{22}
\end{pmatrix}.\label{sigmadash}
\end{gather}

From (\ref{sigmadash}) we get $t_{21}t_{12} = 0$, plug it in (\ref{unitsdash}) into (2, 2)-element and obtain $t_{22}^2 = 1$, hence $t_{22} = \pm 1$ which belongs to the center of ring $R_2$. Examining (2, 2)-element of (\ref{diagdash}) we get: $t_{22}^2 = \alpha_2 t_{22}\alpha_2^{-1} = t_{22}$, then $t_{22} = 1$.

Consider two cases: 1) $t_{12}$ is invertible, 2) $t_{11}$ is invertible (at least one of these holds true, because $\Phi(E+E_{12})$ is invertible).

{\it Case 1:}  $t_{12}$ is invertible.

We use (\ref{unitsdash}) again, considering (1, 2)-elements we obtain $(t_{11} - 1)t_{12} = 0$, hence $t_{11} = 1$. Then at place(1, 1) in (\ref{diagdash}) we have $1 + t_{12}t_{21} = 1$ and $t_{21} = 0$.

To prove that $t_{12} = 1$, we use (\ref{rockstardash}), where (1, 1)-element equals 
$1 = t_{21}^3 - t_{22}t_{11}t_{21} - t_{21}t_{22}t_{11} + t_{22}t_{12}t_{11} = t_{12}$.

{\it Case 2:}  $t_{11}$ is invertible.
Plug $t_{22} = 1$ and $t_{21}t_{12} = 0$ in (1, 2)-element of (\ref{rockstardash}): 
$$
t_{21}^2t_{22} - t_{22}t_{11}t_{22} -t_{21}t_{22}t_{12} + t_{22}t_{12}^2=t_{21}^2-t_{11}+t_{12}^2 = 0,
$$
since $t_{11}$ is invertible, equality $t_{21}^2 + t_{12}^2 = t_{11}$ inplies, that at least one of elements $t_{12}, t_{21}$ is invertible. If $t_{12}$ is invertible, the lemma is proven. Assume now, $t_{21}$ is invertible. Then from (\ref{sigmadash}) it follows that $t_{11} = 1$ and $t_{12} = 0$ (elements (3, 1) and (3, 2) correspondingly). Let us find $t_{21}$. At places (1,1) and (1,2) in (\ref{rockstardash}) we have the following: 
$$
t_{21}^3 - 2t_{21} = 1, \\
- t_{21}^2 + 1 = 0,
$$
then $t_{21} = -1$. Q.~e.~d.
\end{proof}

Under the conditions of the previous lemma, additionally assume that groups $\GL_n(R_1)$ and $\GL_n(R_2)$ admit the inverse-transpose automorphism. Note that this automorphism acts identically on diagonal matrices and permutation matrices $\sigma_{12}, \sigma_{23}$. In case $\Phi(E+E_{12}) = E - E_{21}$, we can apply the inverse-transpose automorphism to the image and obtain, that the form of $E+E_{12}$ does not change.

\begin{lemma}\label{transvections}
Let $R_1$ and $R_2$ be infinite local comuutative rings with $1/2$ and $\GL_n(R_1)\equiv_{\forall}\GL_n(R_2)$, also $\GL_n(R_1)$ and $\GL_n(R_2)$ admit the inverse-transpose automorphism. Let $M_1$ be an arbitrary finite submodel of $\GL_n(R_1)$, which contains the finite set of matrices from the previous lemmas and $\diag[2, 1, 1/2, 1,\dots, 1]$, and let $M_2$ be a finite submodel of $\GL_n(R_2)$, which is isomorphic to $M_1$. Then for every isomorphism
$$
\Phi\colon M_1 \rightarrow M _2
$$
the image of any finite set of matrices of form $E+ \alpha_i E_{12}$, $i=1, \dots, k$ is equal to a set of matrices of form $E+\beta_i E_{12}$, $i=1, \dots, k$, in a basis, in which the forms of all matrices from the previous lemmas do not change.
\end{lemma}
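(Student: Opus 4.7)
The plan is to mimic, for each matrix $T_i := E + \alpha_i E_{12}$ in the given finite family, the analysis carried out in Lemma~\ref{thetransvection} for the single element $E+E_{12}$. The key observation is that, in $M_1$, every such $T_i$ satisfies three quantifier-free relations involving only auxiliary matrices already placed in the submodel ($I_k$ for $k>2$, $D := \diag[2,1,1/2,1,\dots,1]$, and $T_1 := E+E_{12}$):
(1) $T_i I_k = I_k T_i$ for all $k > 2$;
(2) $D T_i D^{-1} = T_i^2$, because $E_{12}^2 = 0$ gives $(E+\alpha_i E_{12})^2 = E + 2\alpha_i E_{12}$ and $D E_{12} D^{-1} = 2 E_{12}$;
(3) $T_i T_1 = T_1 T_i$.
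All three are preserved by $\Phi$, so the image $X_i := \Phi(T_i)$ satisfies the analogous equalities in $M_2$.

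First, from (1) combined with (2), I would repeat the block-diagonal reduction of Lemma~\ref{thetransvection}: $X_i$ splits into a $2\times 2$ top-left block $A_i$ (with entries $a_i, b_i, c_i, d_i$) and a diagonal tail, whose entries satisfy $t = t^2$; using commutativity of $R_2$, locality, and invertibility of $X_i$, each tail entry must equal $1$.

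Next I would exploit (3). By Lemma~\ref{thetransvection}, applying the inverse-transpose automorphism to interchange $E - E_{21}$ with $E+E_{12}$ if needed, one may assume $\Phi(T_1) = E + E_{12}$ in the fixed basis; commutation of $A_i$ with the $2\times 2$ Jordan block then forces $c_i = 0$ and $a_i = d_i$. Finally, applying (2) to the $(1,1)$ entry of $A_i$ and invoking commutativity of $R_2$ gives $a_i = a_i^2$, whence locality and invertibility of $X_i$ force $a_i = 1$. Hence $X_i = E + b_i E_{12}$, and setting $\beta_i := b_i$ yields the desired conclusion for every element of the finite family simultaneously, in the same basis fixed by the previous lemmas.

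The main technical point will be the submodel bookkeeping: one must enlarge $M_1$ (and correspondingly pick an isomorphic $M_2$ using the universal-equivalence criterion) so that it is closed under the finitely many products $T_i I_k$, $T_i^2$, $D T_i D^{-1}$, and $T_i T_1$ used above, so that (1)--(3) are genuine quantifier-free equalities holding in $M_1$ and therefore transported verbatim to $M_2$. Once this closure is arranged, the remainder is a direct extension of the computation already made in Lemma~\ref{transp} and Lemma~\ref{thetransvection}.
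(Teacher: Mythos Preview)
Your proposal is correct and follows essentially the same route as the paper: block reduction via commutation with the $I_k$ ($k>2$) together with the relation $DT_iD^{-1}=T_i^2$, then commutation with $\Phi(E+E_{12})$ to force $c_i=0$ and $a_i=d_i$, and finally the diagonal relation on the $(1,1)$ entry to pin down $a_i=1$. The only minor difference is that the paper inserts the extra relation $(I_2(E+\alpha_iE_{12}))^2=E$ to obtain $a_i^2=1$ (hence $a_i=\pm1$ is central) before reading off $a_i=a_i^2$ from the conjugation identity, whereas you use the commutativity hypothesis directly to convert $d_1 a_i d_1^{-1}=a_i^2$ into $a_i=a_i^2$; your shortcut is legitimate under the lemma's commutativity assumption, while the paper's detour through $I_2$ would survive in a non-commutative setting.
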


\begin{proof}
Similar to lemma \ref{thetransvection} (using commuting with $I_3, \dots, I_n$ and condition (\ref{diag})), we can easily show 
$$
\Phi(E+\alpha_i E_{12})=
\begin{pmatrix}
s^i_{11} & s^i_{12} & 0\\
s^i_{21} & s^i_{22} & 0\\
0 & 0 & E
\end{pmatrix}.
$$

Then using the fact that $E+E_{12}$ and $E+\alpha_i E_{12}$ commute, we get $s^i_{11}=s^i_{22}$ and $s^i_{21}=0$.

All that remains is to prove that two first diagonal elements are ones. Use the condition, which includes a diagonal matrix, for the left-upper corner and the following property of diagonal matrices
$$\Phi(\diag[2, 1, 1/2, 1, \dots, 1])=\diag[d_1, \dots, d_n],$$
we get
$$
\begin{pmatrix}
d_1s^i_{11}d_1^{-1} & d_1s^i_{12}d_2^{-1} \\
0 & d_2s^i_{11}d_2^{-1} \\
\end{pmatrix}=
\begin{pmatrix}
(s^i_{11})^2 & s^i_{11}s^i_{12} + s^i_{12} s^i_{11}\\
0 & (s^i_{11})^2
\end{pmatrix}.
$$
Note that $(I_2(E+\alpha_i E_{12}))^2 = E$, it follows that for the images the following condition holds true:
$$
\begin{pmatrix}
1 & 0\\
0 & 1
\end{pmatrix} = 
\begin{pmatrix}
s^i_{11} & s^i_{12}\\
0 & s^i_{11}
\end{pmatrix}^2=
\begin{pmatrix}
(s^i_{11})^2 & s^i_{11}s^i_{12}-s^i_{12}s^i_{11}\\
0 & (s^i_{11})^2
\end{pmatrix}.
$$
It follows, that $s^i_{11} = \pm 1$ is an element from the center, then $1 = (s^i_{11})^2 = d_1s^i_{11}d_1^{-1} = s^i_{11}$.
\end{proof}

Let us make a few remarks on the connections between the operations in local ring $R$ and group $\GL_n(R)$. Firstly,
\begin{gather*}
(E+\alpha E_{12})(E+\beta E_{12}) = E+(\alpha +\beta)E_{12},\\
\end{gather*}
secondly,
\begin{gather*}
[E+\alpha E_{12}, E+\beta E_{23}]=E+\alpha\beta E_{13},\\
\end{gather*}
additionally,
\begin{gather*}
E+\alpha E_{23}=\sigma_{12}\sigma_{23}(E+\alpha E_{12})\sigma_{23}\sigma_{12},\\
E+\alpha E_{13}=[E+E_{12}, E+\alpha E_{23}].\\
\end{gather*}

Now the main proposition follows from the lemmas similar to article \cite{BuninaKaleeva}. In more detail, let $S_1\subset R_1$ be a finite submodel in $R_1$. Our goal is to find in $R_2$ an isomorphic to $S_1$ submodel $S_2$. Let $M_1\subset\GL_n(R_1)$ be a finite submodel, which contains all matrices of form $E+\alpha E_{12}$ for all $\alpha\in S_1$, and the finite set of all auxiliary matrices from the lemmas. Since groups are universally equivalent, for $M_1$ there exists a submodel $M_2\subset\GL_n(R_2)$, isomorphic to $M_1$. Moreover, every isomorphism $\Phi\colon M_1\rightarrow M_2$ does not change the form of matrices mentioned above. Hence, in some common basis the images of matrices $E+\alpha E_{12}, E+\alpha E_{13}, E+\alpha E_{23}$ have form $E+\beta_1 E_{12}, E+\beta_2 E_{13}, E+\beta_3 E_{23}$, where $\beta_1=\beta_2=\beta_3\in R_2$. Then define $S_2$ as the set of all elements of $R_2$, which correspond triples of images $E+\alpha E_{12}, E+\alpha E_{13}, E+\alpha E_{23}$ for all $\alpha\in S_1$.

\input{references}

\end{document}